\definecolor{webgreen}{rgb}{0,.5,0}
\definecolor{webbrown}{rgb}{.6,0,0}
\newcommand{\seqnum}[1]{\href{https://oeis.org/#1}{\rm \underline{#1}}}
\begin{document}

%\begin{center}
%\includegraphics[width=4in]{logo129.eps}
%\end{center}

\theoremstyle{plain}
\newtheorem{theorem}{Theorem}
\newtheorem{corollary}[theorem]{Corollary}
\newtheorem{lemma}[theorem]{Lemma}
\newtheorem{proposition}[theorem]{Proposition}

\theoremstyle{definition}
\newtheorem{definition}[theorem]{Definition}
\newtheorem{example}[theorem]{Example}
\newtheorem{conjecture}[theorem]{Conjecture}

\theoremstyle{remark}
\newtheorem{remark}[theorem]{Remark}

\begin{center}
\vskip 1cm{\Large\bf
On Hofstadter's G-sequence
}
\vskip 1cm
F. M. Dekking \\
CWI Amsterdam and Delft University of Technology \\
Faculty EEMCS \\
P.O.~Box 5031\\
2600 GA Delft \\
The Netherlands\\
\href{mailto:F.M.Dekking@math.tudelft.nl}{\tt F.M.Dekking@math.tudelft.nl} \\
\end{center}

\vskip .2 in

\begin{abstract}
We characterize the entries of Hofstadter's G-sequence in terms of the lower and upper Wythoff sequences. This can be used to give a short and comprehensive proof of the equality of Hofstadter's G-sequence and  the sequence of averages of the swapped Wythoff sequences. In a second part we give some results that hold when one replaces the golden mean by other quadratic algebraic numbers. In a third part we prove  a close relationship between Hofstadter's G-sequence and a sequence studied by Avdivpahi\'c and Zejnulahi.
\end{abstract}

\section{Introduction}

Hofstadter's G-sequence $G$  is defined by $G(1)=1, G(n)=n-G(G(n-1)))$ for $n\ge 2$.

\begin{table}[H]
\begin{center}
\begin{tabular}{c|ccccccccccccccccccc}
$n$ &                  0 & 1 & 2 & 3 & 4 & 5 & 6 & 7 & 8 & 9 & 10 & 11 & 12 & 13 & 14 & 15 & 16 & 17 & 18 \\
\hline $G(n)$ & 0 &  1 &  1 &  2 &  3 &  3 &  4 &  4 &  5 &  6 &  6 &  7 &  8 &  8 &  9 &  9 &  10 &  11 &  11\\[-.3cm]
\end{tabular} \end{center}
\caption{$G(n)$ = A005206($n$) for $n=0,\ldots,18$.} \label{Hof}
\end{table}

 It was proved in 1988, independently in the two articles \cite{Gault,Granville} that there is a simple expression for Hofstadter's G-sequence as a slow Beatty sequence, given for $n\ge 0$ by

\begin{equation}\label{eq:GB}
  G(n) = \lfloor (n+1)\gamma \rfloor,
\end{equation}
where $\gamma=(\sqrt{5}-1)/2$, the small golden mean.

The terminology `slow Beatty sequence' comes from the paper \cite{kimbstol} by Kimberling and Stolarsky.
Actually, $G$ is \emph{not} a Beatty sequence: Beatty sequences are  sequences $( \lfloor n\alpha  \rfloor))$ with $\alpha$ a positive real number \emph{larger} than 1. See, e.g., the papers \cite{Beatty, Stolarski}.

The paper by Kimberling and Stolarsky provides the following basic result.

\begin{theorem}{\bf [Kimberling and Stolarsky]} \label{th:KS}
Suppose that $\gamma$ in $(0, 1)$ is irrational, and let $ s(n) = \lfloor (n+1)\gamma \rfloor$ for $n\ge 0$. Let $A$ be the set $\{n\ge 0 : s(n+1) = s(n)\}$ and let $a(0) < a(1) < \ldots$ be the members of $A$ in increasing order.  Similarly, let $b$ be the sequence of those n such
that $s(n + 1) = s(n) + 1$. Then $a$ is the Beatty sequence of $1/(1-\gamma)$, and  $b$ is the Beatty sequence of $1/\gamma$.
\end{theorem}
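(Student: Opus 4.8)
The plan is to reduce everything to a statement about the fractional parts $\{(n+1)\gamma\}$ and then to recognize the two resulting sets as Beatty sequences through an explicit bijection. Since $0<\gamma<1$, the difference $s(n+1)-s(n)=\lfloor(n+2)\gamma\rfloor-\lfloor(n+1)\gamma\rfloor$ is always $0$ or $1$, so $A$ and the jump set (call it $B$) together partition $\{0,1,2,\dots\}$. Writing $s(n)=(n+1)\gamma-\{(n+1)\gamma\}$ and expanding $\{(n+2)\gamma\}=\{(n+1)\gamma+\gamma\}$, I find that $s(n+1)-s(n)=1$ precisely when $\{(n+1)\gamma\}>1-\gamma$ and $s(n+1)-s(n)=0$ precisely when $\{(n+1)\gamma\}<1-\gamma$; equality cannot occur because $\gamma$ is irrational. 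Substituting $m=n+1$, so that $m$ runs through the positive integers, the set $B$ corresponds to $\{m\ge1:\{m\gamma\}>1-\gamma\}$ and the set $A$ to $\{m\ge1:\{m\gamma\}<1-\gamma\}$, each read off after translating the index back by one.

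The heart of the argument is the identity $\{m\ge1:\{m\gamma\}>1-\gamma\}=\{\lfloor k/\gamma\rfloor:k\ge1\}$, which says the jump set is the Beatty sequence of $1/\gamma$. I would prove it by noting that $\{m\gamma\}>1-\gamma$ holds exactly when the open interval $(m\gamma,(m+1)\gamma)$ contains an integer $k$; since this interval has length $\gamma<1$, such a $k$ is unique, equals $\lfloor m\gamma\rfloor+1=\lfloor(m+1)\gamma\rfloor$, and satisfies $m\gamma<k<(m+1)\gamma$. These inequalities rearrange to $m<k/\gamma<m+1$, whence $m=\lfloor k/\gamma\rfloor$; conversely, starting from any $k\ge1$ and setting $m=\lfloor k/\gamma\rfloor$ reverses the steps and recovers the fractional-part condition. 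This exhibits mutually inverse, order-preserving bijections $m\leftrightarrow k$, identifying $B$ with $(\lfloor k/\gamma\rfloor)_{k\ge1}$.

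For the flat set I would recycle the same lemma after a reflection of the fractional part. Since $m\gamma\notin\mathbb{Z}$ we have $\{m\gamma\}=1-\{m(1-\gamma)\}$, so the condition $\{m\gamma\}<1-\gamma$ becomes $\{m(1-\gamma)\}>1-(1-\gamma)$. Because $1-\gamma$ is again irrational in $(0,1)$, applying the bijection of the previous paragraph with $\gamma$ replaced by $1-\gamma$ shows the flat set equals $(\lfloor k/(1-\gamma)\rfloor)_{k\ge1}$, the Beatty sequence of $1/(1-\gamma)$. Alternatively, once $B$ is known to be the Beatty sequence of $1/\gamma$, its complement $A$ is forced to be the Beatty sequence of $1/(1-\gamma)$ by Rayleigh's complementary-Beatty theorem, using $\gamma+(1-\gamma)=1$. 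Undoing the substitution $m=n+1$ then yields the claimed descriptions of $a$ and $b$.

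I expect the main obstacle to be the bijection of the second paragraph: one must verify carefully that the integer in $(m\gamma,(m+1)\gamma)$ exists exactly under the fractional-part condition, that $m=\lfloor k/\gamma\rfloor$ genuinely inverts the correspondence, and that every $k\ge1$ is attained, all while invoking the irrationality of $\gamma$ to keep every inequality strict and to exclude integer coincidences. The remaining bookkeeping, namely the index shift stemming from the $(n+1)$ in $s(n)=\lfloor(n+1)\gamma\rfloor$, is routine but must be tracked so that the indexing of $a$ and $b$ lines up with the Beatty sequences produced by the bijection.
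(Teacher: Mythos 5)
Your proof is correct, but there is no proof in the paper to compare it with: the paper quotes this result from Kimberling and Stolarsky \cite{kimbstol} without reproving it. Your route --- reducing the flat/jump dichotomy to the criterion $\{(n+1)\gamma\} < 1-\gamma$ versus $\{(n+1)\gamma\} > 1-\gamma$, identifying the jump set with $(\lfloor k/\gamma\rfloor)_{k\ge 1}$ via the observation that $\{m\gamma\} > 1-\gamma$ holds exactly when the interval $(m\gamma,(m+1)\gamma)$ contains a (necessarily unique) integer $k$, which forces $m=\lfloor k/\gamma\rfloor$, and then transferring to the flat set either by the reflection $\{m\gamma\}=1-\{m(1-\gamma)\}$ or by Rayleigh complementarity --- is sound at every step, and it is in the same elementary floor-function spirit as the paper's own proof of Theorem \ref{th:GUL}.

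One thing you should state explicitly instead of compressing into ``undoing the substitution $m=n+1$ then yields the claimed descriptions'': undoing that substitution does \emph{not} return the Beatty sequences themselves, it returns them translated by one. What your argument actually proves is that $A=\{\lfloor k/(1-\gamma)\rfloor - 1: k\ge 1\}$ and $B=\{\lfloor k/\gamma\rfloor - 1: k\ge 1\}$; equivalently, $s(n+1)=s(n)$ if and only if $n+1$ belongs to the Beatty sequence of $1/(1-\gamma)$. (Check with $\gamma=(\sqrt 5-1)/2$: the flat set is $A=\{1,4,6,9,\dots\}$, whereas the Beatty sequence of $1/(1-\gamma)=\varphi^2$ is $2,5,7,10,\dots$.) This discrepancy is not a flaw in your argument but an imprecision in the statement as transcribed in this paper: Kimberling and Stolarsky's conclusion is formulated for $s(n)=\lfloor n\gamma\rfloor$, and shifting the definition to $s(n)=\lfloor (n+1)\gamma\rfloor$ shifts the two exceptional sets accordingly. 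Note that the paper itself uses the theorem in exactly the shifted form you proved: Equations \eqref{eq:GU} and \eqref{eq:GL} assert $G(n)=G(n-1)$ iff $n=U(M)$ and $G(n)=G(n-1)+1$ iff $n=L(M)$, i.e., membership of $n-1$ in $A$ or $B$ is detected by $n$ itself lying in the corresponding Beatty sequence. So your proof establishes the statement the paper actually goes on to use; just replace your final sentence with the explicit shifted identity so the indexing is unambiguous.
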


When we apply this result to determine the value of $s(n)$ for a given $n$, then we need information on \emph{two} entries, namely $s(n)$ and $s(n+1)$, and given this information we do not yet know for which $m$ $s(n)$ will be equal to $a(m)$, respectively $b(m)$.
The following theorem is more useful in this respect.

\begin{theorem} \label{th:GUL} Suppose that $\gamma$ in $(0, 1)$ is irrational, and let $ s(n+1) = \lfloor n\gamma \rfloor$ for $n\ge 0$.

Let $L(n)=\big\lfloor\frac1{\gamma}n\big\rfloor$, and $U(n)=\big\lfloor\frac1{1-\gamma}n\big\rfloor$ for $n\ge 0$.
Then for all $n\ge 0$\\[-.3cm]
$$s(L(n))=n, \quad s(U(n))=\frac{\gamma}{1-\gamma}n.$$
\end{theorem}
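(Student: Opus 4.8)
The plan is to prove both identities by a direct manipulation of the floor function, using the normalization $s(m)=\lfloor(m+1)\gamma\rfloor$ from \eqref{eq:GB}, and reading the second identity as $s(U(n))=\big\lfloor\frac{\gamma}{1-\gamma}n\big\rfloor$; for the golden mean this collapses to $L(n)$, since there $\frac{\gamma}{1-\gamma}=\frac1\gamma$. Throughout write $\{x\}=x-\lfloor x\rfloor$ for the fractional part, and note that for $n\ge1$ both $n/\gamma$ and $n/(1-\gamma)$ are irrational, so their fractional parts lie strictly in $(0,1)$; the cases $n=0$ are immediate. Conceptually the two identities are exactly what Theorem~\ref{th:KS} predicts: $L$ is the Beatty sequence of $1/\gamma$, i.e.\ the sequence $b$ of indices where $s$ increases by one, and $U$ is the Beatty sequence of $1/(1-\gamma)$, i.e.\ the sequence $a$ of indices where $s$ is constant. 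The floor computation below is self-contained, but it can be read as making this counting picture quantitative.

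For the first identity, set $\theta=\{n/\gamma\}\in(0,1)$, so that $L(n)=n/\gamma-\theta$. Then
$(L(n)+1)\gamma=n-\gamma\theta+\gamma=n+\gamma(1-\theta)$, and since $\gamma(1-\theta)\in(0,\gamma)\subset(0,1)$ the floor removes exactly the integer part: $s(L(n))=\lfloor n+\gamma(1-\theta)\rfloor=n$.

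For the second identity the crux is that the two relevant fractional parts coincide. Writing $\psi=\{n/(1-\gamma)\}$, the elementary identity $\frac{n}{1-\gamma}-\frac{n\gamma}{1-\gamma}=n$ shows that $\frac{n}{1-\gamma}$ and $\frac{n\gamma}{1-\gamma}$ differ by the integer $n$, whence $\{\frac{n\gamma}{1-\gamma}\}=\psi$ as well. Substituting $U(n)=\frac{n}{1-\gamma}-\psi$ into $s(U(n))=\lfloor(U(n)+1)\gamma\rfloor$ gives $s(U(n))=\big\lfloor\frac{n\gamma}{1-\gamma}+\gamma(1-\psi)\big\rfloor$. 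Splitting off $K:=\big\lfloor\frac{n\gamma}{1-\gamma}\big\rfloor$ and using that the fractional part of $\frac{n\gamma}{1-\gamma}$ equals $\psi$, the assertion $s(U(n))=K$ reduces to $\psi+\gamma(1-\psi)<1$, i.e.\ $\psi(1-\gamma)<1-\gamma$, i.e.\ $\psi<1$, which always holds; the trivial bound $\psi+\gamma(1-\psi)\ge0$ shows the floor is not smaller than $K$.

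The only genuinely delicate point is this last step: a priori the correction term $\gamma(1-\psi)$ could push the argument across an integer, and the reason it never does is precisely the coincidence $\{\frac{n\gamma}{1-\gamma}\}=\{\frac{n}{1-\gamma}\}$, which turns the needed estimate into the tautology $\psi<1$. I expect that verifying this fractional-part identity, together with the strict irrationality needed to keep all fractional parts inside the open interval $(0,1)$, is where the care lies; everything else is routine bookkeeping with $\lfloor\cdot\rfloor$.
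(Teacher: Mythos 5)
Your proof is correct, and it follows the same basic route as the paper: write $n/\gamma = L(n) + \varepsilon_n$ and $n/(1-\gamma) = U(n) + \varepsilon_n'$, substitute into $s(m) = \lfloor (m+1)\gamma\rfloor$, and argue that the leftover term $\gamma(1-\cdot)$ cannot push the argument past the next integer; also, both you and the paper effectively read the second identity as $s(U(n)) = \big\lfloor \frac{\gamma}{1-\gamma} n\big\rfloor$, which is the interpretation consistent with the later applications (e.g., $G(U(M))=L(M)$ in the proof of Theorem~\ref{th:main}, and $H(U^{\rm P}(n))=R(n)$ in the Pell case). The difference is in the justification of the second identity, and there your version is more careful than the paper's. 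The paper concludes $\big\lfloor\frac{\gamma}{1-\gamma}n+\gamma(1-\varepsilon_n')\big\rfloor= \frac{\gamma}{1-\gamma}n$ ``since obviously $0\le\gamma(1-\varepsilon_n')<1$''; that reasoning is complete for the \emph{first} identity, where the main term $n$ is an integer, but for the second identity the main term $\frac{n\gamma}{1-\gamma}$ is irrational, and adding a quantity in $[0,1)$ to an irrational number can in general carry the floor up by one. The step you single out as the delicate point --- that $\frac{n}{1-\gamma}$ and $\frac{n\gamma}{1-\gamma}$ differ by the integer $n$, hence $\big\{\frac{n\gamma}{1-\gamma}\big\}=\varepsilon_n'=\psi$, so that the required bound $\psi+\gamma(1-\psi)<1$ reduces to the tautology $\psi<1$ --- is exactly the argument the paper leaves implicit. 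So your proposal is not only correct; it supplies the justification that makes the paper's two-line computation for $s(U(n))$ airtight.
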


\begin{proof} By definition, the sequence $L$ satisfies for $n\ge 0$
  \begin{equation*}\label{eq:LWS}
 \frac1{\gamma}n = L(n)+\varepsilon_n
  \end{equation*}
for some $\varepsilon_n$ with $0\le \varepsilon_n<1$.   This leads to
 \begin{equation*}
s(L(n))= \big\lfloor (L(n)+1)\gamma\big\rfloor = \big\lfloor n +\gamma(1-\varepsilon_n)\big\rfloor=n,
\end{equation*}
since obviously  $0\le\gamma(1-\varepsilon_n)<1$.

By definition, the sequence $U$ satisfies for $n\ge 0$
  \begin{equation*}\label{eq:WS}
  \frac1{1-\gamma}n = U(n)+\varepsilon_n',
  \end{equation*}
for some $\varepsilon_n'$ with $0\le \varepsilon_n'<1$.   This leads to
 \begin{equation*}
s(U(n))= \Big\lfloor (U(n)+1)\gamma\Big\rfloor =  \Big\lfloor\frac{\gamma}{1-\gamma}n+\gamma(1-\varepsilon_n')\Big\rfloor= \frac{\gamma}{1-\gamma}n,
\end{equation*}
since  obviously $0\le\gamma(1-\varepsilon_n')<1$.
\end{proof}

It is well-known (see \cite{Beatty}) that if $\alpha$ and $\beta$ are two real numbers larger than 1, and moreover $1/\alpha+1/\beta = 1$, then $\alpha$ and $\beta$ form a \emph{complementary Beatty pair}, which means that the two sets $\{\lfloor n\alpha  \rfloor, n\ge 1\}$ and  $\{\lfloor n\beta  \rfloor, n\ge 1\}$ are disjoint, and that their union contains every positive integer.
Note that for all $\gamma$ in $(0, 1)$ the sequences $L$ and $U$ form a complementary Beatty pair, since $\frac1{\gamma}>1, \frac1{1-\gamma}>1$ and $(\frac1{\gamma})^{-1}+(\frac1{1-\gamma})^{-1}=1$.

\section{Hofstadter and Wythoff}

The most famous complementary Beatty pair is obtained by choosing $\alpha=\varphi$, and $\beta=\varphi^2$, where $\varphi:=(1+\sqrt{5})/2$ is the golden mean. The Beatty sequences  $L(n)=\lfloor n \varphi \rfloor$ and $U(n)=\lfloor n \varphi^2 \rfloor$ for $n\ge 1$ are known as the \emph{lower Wythoff sequence} and the \emph{upper Wythoff sequence}. The name giving has its origins in the paper \cite{Wyt}.

\begin{table}[H]
\begin{center}
\begin{tabular}{c|ccccccccccccccccccc}
$n$ &               1 & 2 & 3 & 4 & 5 & 6  & 7  & 8  & 9  & 10 & 11 & 12 & 13 & 14 & 15 & 16 & 17 & 18  \\
\hline $ L(n)$ &    1 & 3 & 4 & 6 & 8 & 9 & 11  & 12 & 14 & 16 & 17 & 19 & 21 & 22 & 24 & 25 & 27 & 29  \\
\hline $ U(n)$ &    2 & 5& 7  & 10& 13& 15& 18  & 20 & 23 & 26 & 28 & 31 & 34 & 36 & 39 & 41 & 44 & 47 \\[-.5cm]
\end{tabular} \end{center}
\caption{$ L(n)$ = A000201($n$) and $U(n)$= A001950($n$) for $n=0,\ldots,18$.} \label{lowup}
\end{table}

We next turn our attention to sequence \seqnum{A002251}, described as:
 		Start with the nonnegative integers; then swap $L(k)$ and $U(k)$ for all $k \ge 1$, where  $L$ and $U$ are the lower and upper Wythoff sequences.

 This means that this sequence, which we call $W,$ is defined by

\begin{equation}\label{eq:W}
  W(L(n)) = U(n),\;  W(U(n) )= L(n) \text{ for all } n\ge 1.
\end{equation}

Regrettably, the sequence $W$ has been given offset $0$ in OEIS. One of the unpleasant consequences of the useless addition of $0$ is that sequence \seqnum{A073869} is not a clean Cesar\'o average of \seqnum{A002251}. Another unpleasant consequence is that \seqnum{A073869} is basically a copy of \seqnum{A019444}.

\begin{table}[H]
\begin{center}
\begin{tabular}{c|ccccccccccccccccccc}
$n$ &                  0 & 1 & 2 & 3 & 4 & 5 & 6 & 7 & 8 & 9 & 10 & 11 & 12 & 13 & 14 & 15 & 16 & 17 & 18 \\
\hline $ W(n)$ & 0 &  2 & 1 &  5 &  7 &  3 &  10 &  4 &  13 &  15 &  6 &  18 &  20 &  8 &  23 &  9 &  26 &  28 &  11\\[-.5cm]
\end{tabular} \end{center}
\caption{$ W(n)$ = A002251($n$) for $n=0,\ldots,18$.} \label{SLU}
\end{table}	

The sequence $W$ has the remarkable property that the sum of the first $n+1$ terms is divisible by $n+1$.
This leads to the sequence \seqnum{A073869}, defined as  A073869$(n) = \sum_{i=0}^{n} W(i)/(n+1)$.

\begin{table}[H]
\begin{center}
\begin{tabular}{c|ccccccccccccccccccc}
$n$ &               0 & 1 & 2 & 3 & 4 & 5 & 6 & 7 & 8 & 9 & 10 & 11 & 12 & 13 & 14 & 15 & 16 & 17 & 18 \\
\hline\\[-0.4cm]
$\overline{W}(n)$ & 0 &  1 &  1 &  2 &  3 &  3 &  4 &  4 &  5 &  6 &  6 &  7 &  8 &  8 &  9 &  9 &  10 &  11 &  11\\[-.5cm]
\end{tabular} \end{center}
\caption{$\overline{W}(n)$ = A073869($n$) for $n=0,\ldots,18$.} \label{WALUW}
\end{table}

The following theorem is a conjecture by Murthy in \cite[\seqnum{A073869}]{oeis}, but is proved in the long paper \cite{Venk}. We give a new short proof.

\begin{theorem} \label{th:main} The averaged Wythoff swap sequence $\overline{W}$ is equal to Hofstadter's G-sequence.
\end{theorem}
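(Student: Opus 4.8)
The plan is to prove the identity $\overline{W}(n)=G(n)$ for all $n\ge 0$ by establishing a closed formula for the partial sums $S(n):=\sum_{i=0}^{n}W(i)$ and then comparing with the Beatty expression \eqref{eq:GB} for $G$. Since $\overline{W}(n)=S(n)/(n+1)$ and $G(n)=\lfloor(n+1)\gamma\rfloor$ with $\gamma=(\sqrt5-1)/2$, it suffices to prove
\begin{equation*}
S(n)=(n+1)\,\lfloor(n+1)\gamma\rfloor.
\end{equation*}
First I would compute $S(n)$ directly from the definition of the swap $W$. Because $W$ merely interchanges $L(k)$ and $U(k)$ and fixes $0$, the multiset $\{W(0),\dots,W(n)\}$ is obtained from $\{0,1,\dots,n\}$ by replacing, for each $k$ with $L(k)\le n$, the value $L(k)$ by $U(k)$, and for each $k$ with $U(k)\le n$, the value $U(k)$ by $L(k)$. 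Hence
\begin{equation*}
S(n)=\frac{n(n+1)}2+\sum_{k:\,L(k)\le n}\bigl(U(k)-L(k)\bigr)-\sum_{k:\,U(k)\le n}\bigl(U(k)-L(k)\bigr).
\end{equation*}

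Next I would exploit the classical Wythoff relation $U(k)=L(k)+k$ to turn both correction sums into sums of the single index $k$. The first sum runs over all $k\ge1$ with $L(k)\le n$, i.e.\ over $1\le k\le G(n)$ using the fact (consistent with the lower Wythoff counting function) that the number of $k$ with $\lfloor k\varphi\rfloor\le n$ equals $\lfloor(n+1)\gamma\rfloor=G(n)$; the second runs over $1\le k\le$ (the number of upper-Wythoff values $\le n$), which is $n-G(n)$ by Beatty complementarity. Both inner summands are simply $U(k)-L(k)=k$, so the two sums become $\tfrac12 G(n)(G(n)+1)$ and $\tfrac12(n-G(n))(n-G(n)+1)$ respectively. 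Substituting and writing $g:=G(n)$ collapses everything to an elementary algebraic identity in $n$ and $g$ that I expect to simplify to $(n+1)g$.

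The main obstacle, and the step requiring the most care, is the counting argument that identifies the number of indices $k$ with $L(k)\le n$ as exactly $G(n)=\lfloor(n+1)\gamma\rfloor$; this is where Theorem~\ref{th:GUL} and the complementarity of $L$ and $U$ (already noted in the excerpt) do the real work, and one must track the offset conventions carefully since $W$ is indexed from $0$ while the Wythoff sequences start at $k=1$. Once that count—and the complementary count $n-G(n)$ for the upper sequence—is pinned down, the remainder is a routine verification that the quadratic expression in $g$ and $n$ equals $(n+1)g$, after which dividing by $n+1$ yields $\overline{W}(n)=G(n)$ and incidentally reproves the divisibility property of the partial sums. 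I would double-check the boundary cases $n=0,1$ by hand against Tables~\ref{SLU} and \ref{WALUW} to confirm the offset handling.
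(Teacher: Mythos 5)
Your proposal is correct, and it takes a genuinely different route from the paper. The paper argues by differencing: it reduces the claim to the identity $(n+1)G(n)-nG(n-1)=W(n)$ for $n\ge 2$, splits according to whether $G(n)=G(n-1)$ or $G(n)=G(n-1)+1$, and settles the two resulting pointwise identities ($G(n)=W(n)$ at upper Wythoff indices, $G(n)=W(n)-n$ at lower ones) by combining the Kimberling--Stolarsky characterization of flat and rising steps (Theorem \ref{th:KS}) with Theorem \ref{th:GUL}, the swap definition \eqref{eq:W}, and $U(M)=L(M)+M$. You instead evaluate the full partial sum $S(n)=\sum_{i=0}^{n}W(i)$ in closed form by a counting argument, using complementarity of $L$ and $U$, the same relation $U(k)-L(k)=k$, and the count $\#\{k\ge 1: L(k)\le n\}=G(n)$. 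Your algebra does collapse as you expect: with $g=G(n)$ one gets $\tfrac12 n(n+1)+\tfrac12 g(g+1)-\tfrac12 (n-g)(n-g+1)=(n+1)g$, which is exactly what is needed. Moreover, the counting step you flag as the main obstacle is easier than you fear and needs neither Theorem \ref{th:GUL} nor Theorem \ref{th:KS}: since $n+1$ is an integer, $L(k)=\lfloor k\varphi\rfloor\le n$ holds iff $k\varphi<n+1$, i.e., iff $k<(n+1)\gamma$, and irrationality of $(n+1)\gamma$ gives exactly $\lfloor (n+1)\gamma\rfloor=G(n)$ such $k$; the upper count $n-G(n)$ then follows from Beatty complementarity. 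Comparing the two approaches: yours is more elementary and self-contained, bypassing Theorem \ref{th:KS} entirely and re-proving as a by-product that $n+1$ divides $\sum_{i=0}^{n}W(i)$ (a fact the paper states without proof), while the paper's case analysis produces the pointwise formulas for $W$ at lower and upper Wythoff indices, which it reuses immediately afterwards to prove the Proposition about the two lines in the scatterplot. Both proofs rest on the known closed form \eqref{eq:GB} for $G$, so neither is more general in that respect.
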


\begin{proof}
The result holds for $n=0,1$. It suffices therefore to consider the sequence of differences.
Subtracting $G(n-1)=\sum_{i=0}^{n-1} W(i)/n$ from $G(n)=\sum_{i=0}^{n} W(i)/(n+1)$, we see that we have to  prove for all $n\ge 2$
\begin{equation}\label{eq:rec}
  (n+1)G(n)-nG(n-1)=W(n).
\end{equation}
But we know that there are only two possibilities for the recursion from $G(n-1)$ to $G(n)$. Therefore Equation \eqref{eq:rec} turns into the following two equations.
\begin{align}
  G(n)=G(n-1)&\; \Rightarrow\; G(n) = W(n),\label{eq:GW} \\
  G(n)= G(n-1)+1&\; \Rightarrow\; G(n) = W(n)-n.\label{eq:GW2}
\end{align}
It is not clear how to prove these equalities directly. However, we can exploit Theorem \ref{th:KS}.
According to this theorem with $s=G$, and $\gamma=(\sqrt{5}-1)/2$, and so $1/\gamma=\varphi$, $1/(1-\gamma)=\varphi^2$,
\begin{align}
  G(n)=G(n-1)&\; \Leftrightarrow \; \exists M \text{ such that } n=U(M),\label{eq:GU} \\
  G(n)= G(n-1)+1 &\; \Leftrightarrow\;   \exists M \text{ such that } n=L(M)\label{eq:GL} .
\end{align}
So we first have to prove that $n=U(M)$ implies $G(n) = W(n)$. This holds indeed by an application of Theorem \ref{th:GUL} and Equation \eqref{eq:W}:
$$G(n)=G(U(M)=L(M)=W(U(M))=W(n).$$
Similarly, for the second case $n=L(M)$:
$$G(n)=G(L(M))=M=U(M)-L(M)=W(L(M))-L(M)=W(n)-n.$$
Here we applied $U(M)=L(M)+M$ for $M\ge 1$, a direct consequence of $\varphi^2M=(\varphi+1)M$.
\end{proof}

In the comments of \seqnum{A073869} there is a scatterplot by Sloane---cf.~Figure \ref{fig:scat}. The points have a nice symmetric distribution around the line $y=x$, since the points consists of all pairs $(L(n),U(n))$ and $(U(n),L(n))$ for $n=1,2,\dots$. (Ignoring (0,0).) Apparently the points are almost lying on two lines. What are the equations of these lines? This is answered by the following proposition.

\begin{figure}[ht]
\vspace*{-0.1 cm}
\begin{center}
\includegraphics[width=8.0cm]{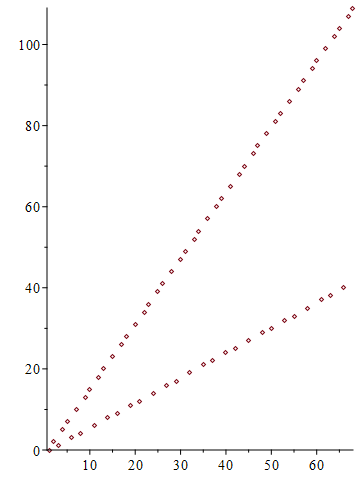}
\caption{\small Scatterplot of the first 68 entries of $W$.}\label{fig:scat}
\vspace*{-0.1cm}
\end{center}
\end{figure}

\begin{proposition} Let $W$ be the Wythoff swap sequence, and $\gamma=1/\varphi$. Then for all $n\ge 1$
$$W(U(n))=\lfloor \gamma U(n)\rfloor,  W(L(n))=\lfloor \varphi L(n) \rfloor +1.$$
\end{proposition}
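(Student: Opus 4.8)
The plan is to eliminate the swap sequence $W$ immediately using its definition \eqref{eq:W}, which gives $W(U(n)) = L(n)$ and $W(L(n)) = U(n)$ outright. The two claimed identities then become the two purely Wythoff statements
$$ L(n) = \lfloor \gamma U(n)\rfloor \quad\text{and}\quad U(n) = \lfloor \varphi L(n)\rfloor + 1, $$
so the whole problem reduces to evaluating two floors and tracking the sizes of the resulting correction terms.

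The single fact I would lean on throughout is that, since $\varphi^2 = \varphi + 1$, for integer $n$ we have $n\varphi^2 = n\varphi + n$, and hence the fractional parts coincide: $\{n\varphi^2\} = \{n\varphi\}$. First I would record $U(n) = n\varphi^2 - \{n\varphi\}$ and $L(n) = n\varphi - \{n\varphi\}$. For the first identity I multiply $U(n)$ by $\gamma = 1/\varphi$ and use $\gamma\varphi^2 = \varphi$, obtaining $\gamma U(n) = n\varphi - \gamma\{n\varphi\} = L(n) + (1-\gamma)\{n\varphi\}$. Since $0 \le (1-\gamma)\{n\varphi\} < 1-\gamma < 1$, the floor is exactly $L(n)$, which is what is wanted.

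For the second identity I would multiply $L(n)$ by $\varphi$ and use $\varphi - 1 = \gamma$, giving $\varphi L(n) = n\varphi^2 - \varphi\{n\varphi\} = U(n) - \gamma\{n\varphi\}$. Here the delicate point is the appearance of the $+1$: because $n \ge 1$ and $\varphi$ is irrational, $\{n\varphi\}$ is \emph{strictly} positive, while $\gamma\{n\varphi\} < \gamma < 1$, so $-\gamma\{n\varphi\}$ lies strictly between $-1$ and $0$. Hence $\lfloor \varphi L(n)\rfloor = U(n) - 1$, which is precisely $U(n) = \lfloor \varphi L(n)\rfloor + 1$.

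The main obstacle---really the only subtlety---is keeping careful track of the half-open interval into which each correction term falls, and in particular noticing that the strict positivity of $\{n\varphi\}$ (valid exactly because $n \ge 1$) is what forces the asymmetric $+1$ in the second formula while it is absent from the first. Everything else is the routine algebra of the golden-mean identities $\gamma\varphi = 1$, $\varphi - 1 = \gamma$, and $\{n\varphi^2\} = \{n\varphi\}$.
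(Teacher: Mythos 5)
Your proof is correct, and it takes a genuinely different route from the paper's. The paper derives the proposition from the $G$-sequence machinery it has already built: from the proof of Theorem \ref{th:main} it reuses the facts that $W(n)=G(n)$ when $G(n)=G(n-1)$ and $W(n)=G(n)+n$ when $G(n)=G(n-1)+1$ (Equations \eqref{eq:GW} and \eqref{eq:GW2}), combines them with the Kimberling--Stolarsky characterization \eqref{eq:GU}--\eqref{eq:GL} of where each alternative occurs, and with the closed form $G(n)=\lfloor (n+1)\gamma\rfloor$ of Equation \eqref{eq:GB}, and the two formulas drop out. You instead bypass $G$ entirely: unfolding the definition \eqref{eq:W} of $W$ reduces the claim to the pure Beatty identities $L(n)=\lfloor\gamma U(n)\rfloor$ and $U(n)=\lfloor\varphi L(n)\rfloor+1$, which you verify by writing $L(n)=n\varphi-\{n\varphi\}$, $U(n)=n\varphi^2-\{n\varphi\}$ and tracking the correction terms $(1-\gamma)\{n\varphi\}\in[0,1)$ and $-\gamma\{n\varphi\}\in(-1,0)$; your observation that the strict positivity of $\{n\varphi\}$ (irrationality plus $n\ge 1$) is exactly what forces the asymmetric $+1$ is the right delicate point, and the algebra checks out. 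As for what each approach buys: yours is elementary and self-contained, needing neither Hofstadter's sequence, nor Theorem \ref{th:KS}, nor the nontrivial identity \eqref{eq:GB}, and its fractional-part bookkeeping is in principle available for other complementary Beatty pairs (though the size of the correction terms would have to be re-checked case by case, since it depends on the relation $\varphi^2=\varphi+1$); the paper's proof is shorter given what precedes it and serves the section's narrative purpose, namely explaining the scatterplot of $W$ through the already-established identification of $W$ with data attached to $G$.
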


\begin{proof}
Equation \eqref{eq:GW} and Equation \eqref{eq:GW2} yield
\begin{align*}
  W(n)& = G(n),\;  & \text{ if }  G(n)=G(n-1),\\
  W(n)& = G(n)+n & \; \text{ if }  G(n)= G(n-1)+1.
\end{align*}
Since $G(n)=\lfloor (n+1)\gamma \rfloor$ by Equation \eqref{eq:GB}, it follows from Equation \eqref{eq:GU} that
$$W(U(M))=\lfloor U(M)\gamma \rfloor.$$
Since all $M=1,2\ldots$ will occur, this gives the first half of the proposition.

For the second half of the proposition we perform the following computation under the assumption that $n=L(M)$:
$$ G(n)+n = G(n-1) + n +1 = \lfloor n\gamma \rfloor +n +1 = \lfloor n(\gamma+1) \rfloor +1 = \lfloor n\varphi \rfloor +1. $$
Now Equation \eqref{eq:GL} gives that $W(L(M))=\lfloor \varphi L(M) \rfloor +1$.
\end{proof}

\begin{remark} Simple applications of Theorem \ref{th:main} prove the  conjectures in \seqnum{A090908} (Terms $a(k)$ of A073869 for which $a(k)=a(k+1)$.), and \seqnum{A090909} (Terms $a(k)$ of A073869 for which $a(k-1), a(k)$ and $a(k+1)$ are distinct.). It also proves the conjectured values of sequence \seqnum{A293688}.
\end{remark}

\section{Generalizations}

There is a lot of literature on generalizations of Hofstadter's recursion $G(n)=n-G(n-1))$. In most cases there is no simple description of the sequences that are generated by such recursions.
An exception is the recursion $V(n)=V(n-V(n-1))+V(n-V(n-4))$ analysed by Balamohan et al.\ \cite{Balamohan}.
The sequence with initial values 1,1,1,1 generated by this recursion is sequence \seqnum{A063882}.
Allouche and Shallit \cite{AllSha} prove  that the `frequencies' of this sequence can be generated by an automaton.
See the recent paper \cite{Fox} for more results on this type of Hofstadter's recursions, known as Hofstadter Q-sequences.	
We consider the  paper \cite{Celaya}, that  gives a direct generalization of Hofstadter's G-sequence.

\begin{theorem}{\bf [Celaya and Ruskey]}\label{th:CR}	Let $k\ge 1$, and let $\gamma = [0; k, k, k, \ldots]$.
Assume $H(n) = 0$ for $n < k$, and for $n \ge k$, let\\[-.5cm]
$$H(n) = n-k+1 - \Big(\sum_{i=1}^{k-1} H(n-i)\Big)- H(H(n-k).$$
Then for $n\ge 1$, $H(n) = \lfloor \gamma(n+1)\rfloor$.
\end{theorem}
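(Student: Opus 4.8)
The plan is to check that the explicit sequence $s(n) := \lfloor \gamma(n+1)\rfloor$ satisfies both the initial conditions and the recursion defining $H$, and then to appeal to the uniqueness of the nested recursion to conclude $H = s$. Everything is driven by the fact that $\gamma = [0;k,k,\dots]$ is the positive root of $\gamma^2 + k\gamma - 1 = 0$, so that
$$\gamma^2 = 1 - k\gamma, \qquad \frac{1}{\gamma} = k + \gamma, \qquad \gamma k = 1 - \gamma^2 \in (0,1),$$
and $\gamma$ is irrational. For the initial segment, $0 \le n < k$ gives $\gamma(n+1) \le \gamma k < 1$, hence $s(n) = 0 = H(n)$. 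The recursion is also well posed along $s$: since $s(n-k) \le \gamma(n-k+1) < n-k+1 \le n$, the nested value $H(H(n-k)) = H(s(n-k))$ only refers to indices already equal to $s$, so it suffices to prove that $s$ obeys the recursion for $n \ge k$.

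Write the claim as $R(n) := \sum_{i=0}^{k-1} s(n-i) + s(s(n-k)) = n-k+1$ and prove it by induction on $n \ge k$. The base case $n = k$ is immediate, since among $s(1),\dots,s(k)$ only $s(k) = 1$ is nonzero and $s(s(0)) = 0$. For the inductive step I would pass to the difference $R(n) - R(n-1)$: the part $\sum_{i=0}^{k-1}\bigl(s(n-i) - s(n-i-1)\bigr)$ telescopes to $s(n) - s(n-k)$, so with $m := n-k \ge 1$ the entire statement reduces to the single identity
$$s(m+k) - s(m) + s(s(m)) - s(s(m-1)) = 1.$$

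To analyse this identity I would set $t := \{\gamma(m+1)\}$, which by irrationality lies in $(0,1)$ and avoids the values $\gamma$ and $\gamma^2$. Two short floor computations classify the outer differences: from $\gamma k = 1 - \gamma^2$ one gets $s(m+k) - s(m) = 1$ exactly when $t > \gamma^2$ (and $0$ otherwise), while $s(m) - s(m-1) = 1$ exactly when $t < \gamma$. The crux, and the step I expect to be the main obstacle, is to control the nested term $s(s(m))$. Here the self-similarity of $\gamma$ is decisive: substituting $s(m) = \gamma(m+1) - t$ and using $\gamma^2 = 1 - k\gamma$ together with $k + \gamma = 1/\gamma$ should yield
$$\gamma\bigl(s(m)+1\bigr) \equiv \gamma - t/\gamma \pmod 1,$$
whence $\{\gamma(s(m)+1)\} = \{\gamma - t/\gamma\}$, and therefore the increment $s(s(m)) - s(s(m)-1)$ equals $1$ exactly when $t < \gamma^2$.

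It then remains to assemble these indicators by splitting $(0,1)$ at $\gamma^2 < \gamma$. For $t \in (\gamma,1)$ one has $s(m) = s(m-1)$, so the nested difference $s(s(m)) - s(s(m-1))$ vanishes while $s(m+k) - s(m) = 1$. For $t \in (0,\gamma)$ one has $s(m-1) = s(m)-1$, so $s(s(m)) - s(s(m-1))$ equals the increment $s(s(m)) - s(s(m)-1)$ computed above; for $t \in (\gamma^2,\gamma)$ this contributes $0$ against $s(m+k) - s(m) = 1$, and for $t \in (0,\gamma^2)$ it contributes $1$ against $s(m+k) - s(m) = 0$. In every case the left-hand side sums to $1$, which completes the induction and gives $H(n) = \lfloor \gamma(n+1)\rfloor$ for all $n \ge 1$.
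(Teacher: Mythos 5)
Your proof is correct, but note that there is nothing in the paper to compare it against: Theorem \ref{th:CR} is imported from Celaya and Ruskey \cite{Celaya} without proof, and their original argument (as the title of that paper suggests) goes through the machinery of morphic words rather than direct floor-function estimates. Your route is a self-contained elementary verification: check the initial segment, observe that the nested recursion is well founded along $s(n)=\lfloor\gamma(n+1)\rfloor$, telescope the claim to the single identity $s(m+k)-s(m)+s(s(m))-s(s(m-1))=1$, and settle it by fractional-part bookkeeping. I checked the three key computations and they hold: with $t=\{\gamma(m+1)\}$, the relation $k\gamma=1-\gamma^2$ gives $s(m+k)-s(m)=1$ exactly when $t>\gamma^2$; the usual step criterion gives $s(m)-s(m-1)=1$ exactly when $t<\gamma$; and writing $\gamma^2(m+1)=(m+1)-k(s(m)+t)$ together with $k+\gamma=1/\gamma$ indeed yields $\gamma(s(m)+1)\equiv\gamma-t/\gamma\pmod 1$, so the nested increment $s(s(m))-s(s(m)-1)$ is $1$ precisely when $t<\gamma^2$ (within the range $t<\gamma$). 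The three cases $t\in(\gamma,1)$, $t\in(\gamma^2,\gamma)$, $t\in(0,\gamma^2)$ then each sum to $1$, as you say. One point of hygiene: your criterion for the nested increment is justified, and is only invoked, when $t<\gamma$; this is legitimate because in that case $s(m-1)=s(m)-1\ge 0$ forces $s(m)\ge 1$, so the index $s(m)-1$ is admissible. Stylistically your argument is very much in the spirit of the paper's own proof of Theorem \ref{th:GUL}, and what it buys over the cited morphic-word approach is brevity and self-containedness, at the price of less structural insight into why nested recurrences of this type produce slow Beatty sequences in general.
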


As an example, we take the case $k=2$. In that case $\gamma=\sqrt{2}-1$, the small silver mean. The recursion for what we call the Hofstadter Pell sequence is
$$H(n)=n-1-H(n-1)-H(H(n-2)).$$
Here Theorem  \ref{th:CR} gives that
$$(H(n))=\lfloor \gamma(n+1)\rfloor=0, 0, 1, 1, 2, 2, 2, 3, 3, 4, 4, 4, 5, 5, 6, 6, 7, 7, 7, 8, 8, 9, 9, 9,10,10,\ldots.$$
This is sequence \seqnum{A097508} in OEIS.

Let $1/\gamma=1+\sqrt{2}$ and $1/(1-\gamma)=1+\frac12\sqrt{2}$ form the Beatty pair given by Theorem \ref{th:KS}.
Let $L^{\rm P}=(\lfloor n(1+\sqrt{2})\rfloor)$ and $U^{\rm P}=(\lfloor n(1+\frac12\sqrt{2})\rfloor)$ be the associated Beatty sequences.
One has $L^{\rm P}=\seqnum{A003151}$, and $U^{\rm P}=\seqnum{A003152}$.

According to Theorem \ref{th:GUL}, with $R$ the slow Beatty sequence \seqnum{A049472} given by $R(n)=\lfloor\frac12\sqrt{2}n\rfloor$, the following holds for the Hofstadter  Pell sequence $H$:
$$H(L^{\rm P}(n))=n,\;H(U^{\rm P}(n))=R(n),\text{ for all } n\ge 1.$$

The sequence with  $L^{\rm P}$ and $U^{\rm P}$ swapped is

\seqnum{A109250} $=2, 1, 4, 3, 7, 9, 5, 12, 6, 14, 16, 8, 19, 10, 21, 11, 24, 26\dots$.\\
Apparently there is nothing comparable to the averaging phenomenon that occurred in the golden mean case.

\begin{remark} See \seqnum{A078474}, and in particular \seqnum{A286389}  for two generalizations of Hofstadter's recursion, with conjectured expressions similar to Equation \eqref{eq:GB}. The conjecture for \seqnum{A286389} is recently proved in Shallit \cite{Remark7}.

For the recursion $a(n)=n-\lfloor \frac12 a(a(n-1))\rfloor$ given in \seqnum{A138466} it is proved by Cloitre that   $(a(n))$ satisfies  Equation \eqref{eq:GB} with $\gamma=\sqrt{3}-1$. For generalizations of this see \seqnum{A138467}.
\end{remark}

\section{Greediness}
There is a more natural way to define the Wythoff swap sequence $W$, which at first sight has nothing to do with Wythoff sequences.
 Venkatachala in the paper \cite{Venk} considers the following greedy algorithm:\\
$f(1) = 1$,  and for $n\ge 2, f(n)$ is the least natural number such that
$$(a) \; f(n) \not\in \{ f(1),\ldots,f(n-1)\};\quad (b) \;   f(1)+f(2)+\cdots+ f(n) \;\text{ is divisible by }\; n.$$
Surprisingly, it follows from Venkatachala's analysis that one has for all $n\ge 1$
$$W(n)=f(n+1)-1.$$
 The recent paper \cite{Avdip} studied a sequence $z$ defined by a similar greedy algorithm:\\
 $z(1) = 1$,  and for $n\ge 2, z(n)$ is the least natural number such that
 $$(a) \; z(n) \not\in \{ z(1),\ldots,z(n-1)\};\quad (b) \;   z(1)+z(2)+\cdots+ z(n) \equiv 1 \mod(n+1).$$
 This entails that $(m(n))$, defined by $m(n):=(z(2)+\ldots+z(n))/(n+1)$ for $n\ge 1$, is a sequence of integers.

 These sequences have been analysed by Shallit in the paper \cite{Walnut} using the computer software Walnut.
Our Theorem \ref{th:AZ} is an improvement of  \cite[Theorem 6]{Walnut}. In the proof of Theorem \ref{th:AZ} we need  the values of the Wythoff sequences at the Fibonacci numbers.

\begin{lemma} \label{lem:LUvalues}  Let $L$ and $U$ be the Wythoff sequences. Then for all $k\ge 1$
\begin{align}
& L(F_{2k}) =F_{2k+1}-1;\label{eq:LF2k}\\
& U(F_{2k}) =F_{2k+2}-1;\label{eq:UF2k}\\
& L(F_{2k-1})=F_{2k};\label{eq:x}\\
& U(F_{2k-1}) =F_{2k+1}.\label{eq:UFodd}
\end{align}
\end{lemma}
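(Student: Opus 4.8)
The plan is to reduce everything to a single Binet-type estimate for $F_n\varphi$ and then read off the four identities by taking floors. Recall that $\varphi=(1+\sqrt5)/2$ and that the paper's small golden mean $\gamma=(\sqrt5-1)/2$ satisfies $\varphi=1/\gamma$ and $-\gamma=(1-\sqrt5)/2$, the conjugate root of $x^2=x+1$. First I would establish the elementary identity
\[
F_n\varphi = F_{n+1}-(-\gamma)^n \qquad (n\ge 1),
\]
which follows directly from Binet's formula $F_n=(\varphi^n-(-\gamma)^n)/\sqrt5$ together with $\varphi-(-\gamma)=\sqrt5$: indeed $F_{n+1}-F_n\varphi=(-\gamma)^n(\varphi-(-\gamma))/\sqrt5=(-\gamma)^n$. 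Since $0<\gamma<1$, the error term $(-\gamma)^n$ is positive for even $n$ and negative for odd $n$, and $0<\gamma^n<1$ for every $n\ge 1$.

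The two equations for $L$ then come out by a parity split. For even index $n=2k$ the identity reads $F_{2k}\varphi=F_{2k+1}-\gamma^{2k}$ with $0<\gamma^{2k}<1$, so $\lfloor F_{2k}\varphi\rfloor=F_{2k+1}-1$, which is \eqref{eq:LF2k}. For odd index $n=2k-1$ it reads $F_{2k-1}\varphi=F_{2k}+\gamma^{2k-1}$ with $0<\gamma^{2k-1}<1$, so $\lfloor F_{2k-1}\varphi\rfloor=F_{2k}$, which is \eqref{eq:x}. The whole content of these two cases is exactly the sign of $(-\gamma)^n$: when it is negative we lose a unit in the floor, when it is positive we do not.

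The two equations for $U$ require no new estimate. Using $U(n)=L(n)+n$ for $n\ge 1$, a relation already recorded in the proof of Theorem~\ref{th:main} as a consequence of $\varphi^2=\varphi+1$, together with the Fibonacci recurrence $F_m+F_{m+1}=F_{m+2}$, I would compute $U(F_{2k})=L(F_{2k})+F_{2k}=(F_{2k+1}-1)+F_{2k}=F_{2k+2}-1$, giving \eqref{eq:UF2k}, and $U(F_{2k-1})=L(F_{2k-1})+F_{2k-1}=F_{2k}+F_{2k-1}=F_{2k+1}$, giving \eqref{eq:UFodd}. The only point requiring any care is the bounding step $0<\gamma^n<1$ in the two $L$ cases; everything else is the Fibonacci recurrence, so there is no real obstacle beyond getting that sign bookkeeping right.
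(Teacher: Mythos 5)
Your proof is correct, but it takes a genuinely different route from the paper. You argue analytically: Binet's formula gives the identity $F_n\varphi = F_{n+1}-(-\gamma)^n$, and the two $L$-identities follow by taking floors according to the parity of $n$, while the two $U$-identities come for free from $U(n)=L(n)+n$ (the same consequence of $\varphi^2=\varphi+1$ that the paper uses in the proof of Theorem~\ref{th:main}). The paper instead argues combinatorially: it recalls that $L(m)$ is the position of the $m$-th $0$ in the infinite Fibonacci word generated by $\mu\colon 0\mapsto 01,\ 1\mapsto 0$, and reads off all four identities from the letter counts $|\mu^m(0)|=F_m$, $|\mu^m(0)|_0=F_{m-1}$, $|\mu^m(0)|_1=F_{m-2}$ together with the observation that $\mu^m(0)$ ends in $01$ for odd $m$ and in $10$ for even $m$. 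Your route buys minimal machinery and self-containedness—one conjugate estimate plus a floor computation—whereas the paper's route buys a structural explanation tied to the Fibonacci word, treating $L$ and $U$ symmetrically as positions of $0$'s and $1$'s, which fits the symbolic flavor of the surrounding material. One slip in your prose, though it does not affect the argument: your closing summary sentence reverses the sign bookkeeping. Since $F_n\varphi=F_{n+1}-(-\gamma)^n$, the floor drops by one exactly when $(-\gamma)^n$ is \emph{positive} (even $n$, giving \eqref{eq:LF2k}), and there is no drop when it is negative (odd $n$, giving \eqref{eq:x}); your displayed computations have this right, so only that final sentence needs correcting.
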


\begin{proof}
These equations can be derived from  \cite{Avdip}[Lemma 2.D]. Another, easy, proof is based on recalling that $L(m)$ gives the position of the $m^{\rm th}$ $0$ in the infinite Fibonacci word $0100101\dots$ generated by the morphism $\mu: 0\mapsto 01, 1\mapsto 0$ (see, e.g., \cite[Corollary 9.1.6]{AllShall}). The infinite Fibonacci word is the limit of the words $\mu(0)=01, \mu^2(0)=010, \mu^3(0)=01001, \mu^4(0)=01001010,\ldots$.

Let $|w|$, $|w|_0$, and $|w|_1$  denote the length, the number of 0's and the number of 1's of a word $w$. Then it is easy to see that
\begin{equation}\label{eq:mun}
|\mu^m(0)|=F_m,\quad  |\mu^m(0)|_0=F_{m-1},\quad |\mu^m(0)|_1=F_{m-2},
\end{equation}
for all $m\ge 1$, where $\mu^m$ is the $m^{\rm th}$ iterate of $\mu$.
Since $\mu^m(0)$ ends in $01$ for odd $m$, Equation (\ref{eq:mun}) with $m=2k+1$  implies that Equation (\ref{eq:LF2k}) holds.
Similarly, since $\mu^m(0)$ ends in $10$ for even $m$, one obtains Equation (\ref{eq:UF2k}).
That Equation (\ref{eq:x}) is correct follows from Equation (\ref{eq:mun}) with $m=2k$ , since $\mu^m(0)$ ends with $0$ for odd $m$.
Similarly, since $\mu^m(0)$ ends with $1$ for odd $m$, one obtains Equation (\ref{eq:UFodd}) with $m=2k+1$.
\end{proof}

\begin{theorem} \label{th:AZ}  Let $(z(n))$ and $(m(n))$ be the  Avdivpahic and Zejnulahi sequences.
Let $W$ be the Wythoff swap sequence. Then for all $n\ge 1$
\begin{align*}
\text{\rm (a) } z(n) & =W(n)\quad \text{\rm except if  } n=F_{2k+1}\!-\!1 \text{ \rm or } n= F_{2k+1},\\
 \text{\rm In fact, }  &  z(F_{2k+1}\!-\!1)=F_{2k}\!-\!1, z(F_{2k+1})=F_{2k+2},\\
                       &  W(F_{2k+1}\!-\!1)=F_{2k+2}\!-\!1, W(F_{2k+1})=F_{2k}.  \\
\text{\rm (b) } m(n) & =\overline{W}(n)\quad \text{\rm except if } n= F_{2k+1}\!-\!1.\\
 \text{\rm In fact, }  & m(F_{2k+1}\!-\!1)=F_{2k}\!-\!1, \overline{W}(F_{2k+1}\!-\!1)=F_{2k}.
\end{align*}
\end{theorem}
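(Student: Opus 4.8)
The plan is to derive part (b) from part (a) by a telescoping argument, and to establish part (a) by strong induction, using the fact that the greedy rule makes $z$ the unique sequence obeying its two constraints.

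For the reduction I would introduce $D(n)=\sum_{i=1}^n\big(z(i)-W(i)\big)$ and note that, since $z(1)=1$, $W(0)=0$ and $\sum_{i=0}^n W(i)=(n+1)\overline{W}(n)$, one has $m(n)-\overline{W}(n)=\big(D(n)-1\big)/(n+1)$. Granting (a), the only nonzero summands of $D$ occur at the exceptional indices, and within each pair $\{F_{2k+1}-1,\,F_{2k+1}\}$ they are $z-W=-F_{2k+1}$ and $+F_{2k+1}$ (using $F_{2k+2}-F_{2k}=F_{2k+1}$), so they cancel. Hence $D(n)=1$ for every $n$ not of the form $F_{2k+1}-1$, while at $n=F_{2k+1}-1$ an uncancelled term gives $D(n)=1-F_{2k+1}=1-(n+1)$; as $\overline{W}=G$ (Theorem~\ref{th:main}) and $\overline{W}(F_{2k+1}-1)=\lfloor F_{2k+1}\gamma\rfloor=F_{2k}$ by \eqref{eq:GB}, this yields $m(F_{2k+1}-1)=\overline{W}(F_{2k+1}-1)-1=F_{2k}-1$, which is (b).

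For (a) I would argue that the two defining conditions determine $z$ step by step: the congruence fixes $z(n)$ modulo $n+1$, and the distinctness requirement then selects the least unused representative, so it suffices to check the candidate values against these rules. Writing the partial sum $z(1)+\cdots+z(n-1)$ as $n\,\overline{W}(n-1)+D(n-1)$ and reducing modulo $n+1$ (so that $n\equiv-1$) gives $z(n)\equiv 1+\overline{W}(n-1)-D(n-1)\pmod{n+1}$; when $D(n-1)=1$ this is just $G(n-1)$. Using $\overline{W}=G$ together with $W(n)=G(n)$ or $W(n)=G(n)+n$ according as $G(n)=G(n-1)$ or $G(n)=G(n-1)+1$ (Equations~\eqref{eq:GW} and~\eqref{eq:GW2}), one verifies $W(n)\equiv G(n-1)\pmod{n+1}$, so the candidate $W(n)$ lies in the correct residue class in the generic case; at $n=F_{2k+1}$ the shift $D(n-1)=1-F_{2k+1}$ changes the target residue by $n+1$ and is exactly what forces $z(F_{2k+1})=F_{2k+2}\neq W(F_{2k+1})$.

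The substance is the minimality step, and this is where I expect the main obstacle. If $n=U(m)$ then $W(n)=L(m)=n-m<n+1$, so there is no smaller positive representative and minimality is immediate once $W(n)$ is seen to be unused. If $n=L(m)$ then $W(n)=U(m)=n+m$, and the unique smaller representative in the class is $U(m)-(n+1)=m-1$; thus $z(n)=W(n)$ precisely when $m-1$ is already used, and $z(n)=m-1$ otherwise. I would show that the exceptions are self-propagating: at $n=L(F_{2k})=F_{2k+1}-1$ (Lemma~\ref{lem:LUvalues}, \eqref{eq:LF2k}) the smaller representative is $F_{2k}-1=U(F_{2k-2})$ (\eqref{eq:UF2k}), whose $W$-preimage is $L(F_{2k-2})=F_{2k-1}-1$ --- the previous exceptional index, at which $z$ declined to use $F_{2k}-1$; hence $F_{2k}-1$ is still free and $z$ takes it, and at the partner index $F_{2k+1}$ this just-used value is the blocked smaller representative, pushing $z$ up by $n+1$ to $F_{2k+2}$. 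For every non-Fibonacci $m$ I would instead locate the $W$-preimage of $m-1$ and check it is a non-exceptional index below $L(m)$, so that $m-1$ is genuinely used and $z(n)=W(n)$. The delicate points throughout will be ruling out spurious collisions of $W(n)$ with the finitely many exceptional values and verifying the base cases $n\le 5$ --- where, in particular, $z(F_3-1)=z(1)=1$ is the hard-coded initial value rather than $F_2-1$ --- with Lemma~\ref{lem:LUvalues} supplying all the exact Wythoff values at Fibonacci numbers that the induction needs.
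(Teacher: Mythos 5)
Your plan splits into two parts, and they compare very differently with the paper. For part (b), your telescoping argument via $D(n)=\sum_{i\le n}(z(i)-W(i))$ is essentially the paper's own proof (pairwise cancellation of the exceptional contributions, plus the computation $\overline{W}(F_{2k+1}-1)=L(F_{2k+1})-F_{2k+1}=F_{2k}$ from Theorem \ref{th:main} and \eqref{eq:x}). For part (a), however, you are attempting something the paper never does: the paper simply \emph{quotes} the closed form for $z$ proved by Avdispahi\'c and Zejnulahi \cite{Avdip} ($z(F_k-1)=F_{k-1}-1$, $z(F_k)=F_{k+1}$, $z(L(k))=U(k)$, $z(U(k))=L(k)$), so its entire proof of (a) is a comparison of $W$ with that formula at Fibonacci-adjacent indices using Lemma \ref{lem:LUvalues}. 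You instead propose to re-derive this closed form from the greedy definition by induction. That is a legitimate and more self-contained route, and your key observations are sound: $W(n)\equiv G(n-1)\pmod{n+1}$, the dichotomy $n=U(m)$ (least representative) versus $n=L(m)$ (smaller representative $m-1$), and the self-propagation of exceptions.

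But as stated your minimality step has a genuine gap. In the generic case $n=L(m)$ you propose to verify that $m-1$ is used by locating its $W$-preimage and checking it is a \emph{non-exceptional} index below $L(m)$. This check fails for every $m=F_{2k}+1$: there $m-1=F_{2k}=L(F_{2k-1})$, whose $W$-preimage is $U(F_{2k-1})=F_{2k+1}$ by \eqref{eq:x} and \eqref{eq:UFodd} --- an exceptional index, at which $z$ takes $F_{2k+2}$, not $F_{2k}$. Concretely, at $n=L(9)=14$ the smaller representative is $8$, whose $W$-preimage is $13=F_7$, exceptional; yet $z(14)=W(14)=23$ is still correct because $8$ was used earlier as the \emph{exceptional value} $z(F_5)=z(5)=F_6=8$. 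So your inductive invariant must be strengthened: the values discarded at exceptional indices are exactly supplied as exceptional values at earlier odd-indexed Fibonacci positions, and these plug the holes at all indices $n=L(F_{2k}+1)$. Your case list omits this, and without it the induction does not close.

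Two smaller slips. In the (b) reduction you say the exceptional summands of $D$ cancel in pairs and conclude $D(n)=1$; if every pair cancelled exactly you would get $D(n)=0$ and hence $m(n)-\overline{W}(n)=-1/(n+1)$, contradicting integrality. What actually produces $D(n)=1$ is the $k=1$ anomaly you only mention at the end: $z(1)=1$ rather than $F_2-1=0$, so the first pair contributes $+1$ (indeed the theorem's clause $z(F_{2k+1}-1)=F_{2k}-1$ must be read with $k\ge 2$). Second, at $n=F_{2k+1}$ the shift $D(n-1)=1-F_{2k+1}$ moves the target residue class by $-1$ (equivalently by $n$), not ``by $n+1$'', which would be no move at all; your subsequent concrete description (blocked representative $F_{2k}-1$, forced value $F_{2k}-1+(n+1)=F_{2k+2}$) is the correct one and tacitly uses the $-1$ shift.
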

\begin{proof}
Part (a). We use the formula for $z(n)$ proved in the paper \cite{Avdip}:
\begin{displaymath}
z(n) = \begin{cases}
F_{k-1}-1 & \text{if } n=F_{k}-1; \\
F_{k+1}   & \text{if } n=F_k; \\
L(k)      & \text{if } n=U(k); \\
U(k)      & \text{if } n=L(k).
\end{cases}
\end{displaymath}
So $z$ is the swapping of $L$ and $U$ for indices $n\neq F_k-1$ and $n\neq F_k$.
We first handle the case of the Fibonacci numbers with an odd index. Here we have to prove that
\begin{align}
W(F_{2k+1}-1) & =F_{2k+2}-1 \label{eq:WFodd}\\
W(F_{2k+1})  &  =F_{2k} \label{eq:WFeven}.
\end{align}
We start with Equation (\ref{eq:WFodd}).
We have to show that  there exists $m$ such that either the pair of equations  $L(m)=F_{2k+1}-1$, and $U(m)=F_{2k+2}-1$, or the pair of equations $U(m)=F_{2k+1}-1$, and $L(m)=F_{2k+2}-1$ holds.
The first pair of these swapping equations, with the value $m=F_{2k}$, is equal to the equations (\ref{eq:LF2k}), and (\ref{eq:UF2k}), as we see from  Lemma \ref{lem:LUvalues}.

Next, we prove Equation (\ref{eq:WFeven}).
Here Lemma \ref{lem:LUvalues} gives that Equation (\ref{eq:UFodd}) and Equation (\ref{eq:x}) solve the swapping equations.

\medskip

We still have to handle the case with Fibonacci numbers with an even index. There we have to prove that
\begin{align}
W(F_{2k}-1) & =z(F_{2k}-1)=F_{2k-1}-1 \label{eq:WFzodd}\\
 W(F_{2k})  & =z(F_{2k})=F_{2k+1}\label{eq:WFzeven}.
\end{align}
We start with Equation (\ref{eq:WFzeven}).
Here Lemma \ref{lem:LUvalues} gives that  Equation(\ref{eq:x}) and Equation(\ref{eq:UFodd})  solve the swapping equations.

Next, we prove Equation (\ref{eq:WFzodd}). Here Lemma \ref{lem:LUvalues} gives that
  Equation (\ref{eq:LF2k}) and Equation(\ref{eq:UF2k})  solve the swapping equations, both with $k$ shifted by 1.

\bigskip

Part (b). The case $n=1$: for $k=1,\;F_3-1=1$, and $m(1)=1=\overline{W}(1)-1$.

For $n\ge 2$ we have
$$(n+1)m(n)=z(2)+\cdots+z(n), \;(n+1)\overline{W}(n)= W(1)+\cdots+W(n).$$
We see that for $n=2, \; 3m(2)=z(2)=3$, and $3\overline{W}(n)=W(1)+W(2)=2+1=3$.\\
So also
$$(n+1)m(n)=3+z(3)+\cdots+z(n), \;(n+1)\overline{W}(n)=3+ W(3)+\cdots+W(n).$$
Note furthermore that $z(F_{2k+1}-1)+z(F_{2k+1})=F_{2k}-1+F_{2k+2},$ and $\overline{W}(F_{2k+1}-1)+\overline{W}(F_{2k+1})=F_{2k+2}-1+F_{2k}.$\\
Since these two sums are equal, the difference of 1 created at $n=F_{2k+1}-1$ is `repaired' at  $n=F_{2k+1}$. This proves the first part of Part b).

 For the second part we have to see that $m(F_{2k+1}\!-\!1)=F_{2k}-1$, or equivalently (see the proof of the first part of Part (b)), that $\overline{W}(F_{2k+1}-1)=F_{2k}$.

In general we have for all $n\ge 1$ (see Theorem \ref{th:main}):
$$\overline{W}(n)=G(n)=\lfloor (n+1)\gamma\rfloor=\lfloor (n+1)/\varphi\rfloor=\lfloor (n+1)\varphi\rfloor-(n+1)=L(n+1)-(n+1).$$
So we obtain, using Equation(\ref{eq:x})
\begin{equation}\label{eq:W2kplus1}
\overline{W}(F_{2k+1}-1)=L(F_{2k+1})-F_{2k+1}=F_{2k+2}-F_{2k+1}=F_{2k},
\end{equation}
which ends the proof of Part (b).
\end{proof}

 Let $(a(n),b(n))$ defined by the recurrences  $a(n))=n-b(a(n-1)), b(n)=n-a(b(n-1))$ be the ``married" functions of Hofstadter given in his book \cite[p.\ 137]{Hofst}. Here $(a(n))$ is \seqnum{A005378} and $(b(n))$ is \seqnum{A005379}.
\begin{theorem}{\bf [Stoll]\cite{Stoll}} \label{th:Stoll}  Let $(a(n))$ and $(b(n))$ be the ``married"  sequences of Hofstadter. Let $\gamma=(\sqrt{5}-1)/2$ be  the small golden mean.  Then for all $n\ge 1$\\[-.8cm]
\begin{align*}
\text{\rm (a) } a(n) & =\lfloor (n+1)\gamma \rfloor \quad \text{\rm except if } n=F_{2k}\!-\!1:  a(F_{2k}\!-\!1)=\lfloor F_{2k}\gamma\rfloor +1.\\
\text{\rm (b) } b(n) & =\lfloor (n+1)\gamma \rfloor \quad \text{\rm except if } n=F_{2k+1}\!-\!1:  b(F_{2k+1}\!-\!1)=\lfloor F_{2k+1}\gamma\rfloor - 1.
\end{align*}
\end{theorem}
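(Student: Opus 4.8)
The plan is to prove statements (a) and (b) simultaneously by strong induction on $n$, since the two recurrences are coupled through the compositions $b(a(n-1))$ and $a(b(n-1))$. Writing $G(n)=\lfloor(n+1)\gamma\rfloor$ as in Equation~\eqref{eq:GB} and $\mathbf 1[P]$ for the indicator of a proposition $P$, I would recast the claims in the equivalent \emph{deviation form}
\[
a(n)=G(n)+\mathbf 1[\,n=F_{2k}-1\,],\qquad
b(n)=G(n)-\mathbf 1[\,n=F_{2k+1}-1\,],
\]
so that $a$ and $b$ are perturbations of Hofstadter's $G$-sequence supported on the sparse Fibonacci-shifted indices. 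The guiding observation is that $G$ \emph{itself} almost satisfies both married recurrences: substituting $a=b=G$ into $a(n)=n-b(a(n-1))$ reproduces Hofstadter's own relation $G(n)=n-G(G(n-1))$. Thus the entire task is to track the error terms created when the inner argument $a(n-1)$ or $b(n-1)$ departs from $G(n-1)$, or when the outer evaluation of $G$ falls on an increment step.

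For the induction step I would subtract $G(n)=n-G(G(n-1))$ from the recurrence and apply the induction hypothesis to the inner argument, obtaining
\[
a(n)-G(n)=G(G(n-1))-G(a(n-1))+\mathbf 1[\,a(n-1)=F_{2j+1}-1\,],
\]
together with the symmetric identity for $b(n)-G(n)$. Two inputs then resolve the right-hand side. First, the increment structure of $G$: by Theorem~\ref{th:KS} together with Equations~\eqref{eq:GU}--\eqref{eq:GL}, $G$ increases by $1$ exactly at the lower Wythoff indices $n=L(M)$ and is constant exactly at the upper Wythoff indices $n=U(M)$; this decides whether $G(a(n-1))$ exceeds $G(G(n-1))$ by $0$ or $1$ in the only case where $a(n-1)\neq G(n-1)$, namely $n-1=F_{2k}-1$. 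Second, the exact values of $G$ at Fibonacci points, which I would obtain from the identity $G(m)=L(m+1)-(m+1)$ used in the proof of Theorem~\ref{th:main} combined with Lemma~\ref{lem:LUvalues}; this yields $G(F_{2k}-1)=F_{2k-1}-1$ and $G(F_{2k+1}-1)=F_{2k}$, and, via the complementarity of $L$ and $U$, the fact that the odd-indexed Fibonacci numbers $F_{2k-1}$ are upper Wythoff numbers while the even-indexed ones $F_{2k}$ are lower Wythoff numbers.

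With these in hand the verification splits according to whether $n$ equals a Fibonacci number. In the generic case $n\neq F_{2k}$ one has $a(n-1)=G(n-1)$, so the first two terms cancel and $a(n)-G(n)=\mathbf 1[\,G(n-1)=F_{2j+1}-1\,]$; the key point is that $F_{2j+1}-1=L(F_{2j})$ is a lower Wythoff number, so $G$ attains this value on exactly the two consecutive arguments $n-1\in\{F_{2j+2}-2,\,F_{2j+2}-1\}$, of which the second corresponds to the excluded index $n=F_{2j+2}$, leaving the surviving solution $n=F_{2j+2}-1$, as required. In the remaining case $n=F_{2k}$ the $+1$ from $a(n-1)=F_{2k-1}$ is cancelled because $F_{2k-1}$ is an upper Wythoff number, so $G$ does not increment there, and one checks that the residual indicator $\mathbf 1[\,F_{2k-1}=F_{2j+1}-1\,]$ vanishes for $k\ge2$; this gives the claimed absence of a correction at $n=F_{2k}$. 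The analysis for $b$ is entirely parallel, with the roles of even and odd Fibonacci indices interchanged and the relevant values $F_{2k}-1$ now upper Wythoff numbers, hence attained by $G$ exactly once.

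The main obstacle I anticipate is precisely this bookkeeping: showing that the $+1$ correction in $a$ materialises at \emph{exactly} the indices $F_{2k}-1$ and the $-1$ correction in $b$ at \emph{exactly} $F_{2k+1}-1$, with neither spurious corrections elsewhere nor a runaway cascade. The delicacy is that a deviation in $a(n-1)$ is fed into $b$ through $b(a(n-1))$, so I must verify both that an error is produced at the right place and that it is immediately absorbed by the floor at the next step --- the same cancellation-one-step-later phenomenon that drives the paired-sum argument in the proof of Theorem~\ref{th:AZ}(b). I expect the cleanest way to control this to be through the dichotomy, read off from Lemma~\ref{lem:LUvalues} and complementarity, that $G$ takes a value twice or once according as that value is a lower or an upper Wythoff number; this pins the arguments $a(n-1)$ and $b(n-1)$ to the alternating $F_{2k}-1$ and $F_{2k+1}-1$ pattern forced by the two recurrences.
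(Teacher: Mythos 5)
The first thing to know is that the paper contains no proof of this statement at all: Theorem~\ref{th:Stoll} is quoted from Stoll's paper \cite{Stoll} and used as a black box, so there is no internal argument to compare yours against. Judged on its own terms, your strategy is sound, and it has the attractive feature of being self-contained within this paper's machinery. The two facts everything hinges on do check out. First, your dichotomy (``$G$ attains a value twice if it is a lower Wythoff number, once if it is an upper Wythoff number'') follows from the paper's own results: by Theorem~\ref{th:KS} a value of $G$ is repeated exactly when it is taken at some argument $U(M)$, and Theorem~\ref{th:GUL} (with $\gamma=1/\varphi$, so $\gamma/(1-\gamma)=\varphi$) gives $G(U(M))=L(M)$, so the repeated values are precisely the lower Wythoff numbers, while $G(L(v))=v$ shows every value is attained at least once. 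Second, the Fibonacci evaluations you invoke, $G(F_{2k}-1)=F_{2k-1}-1$ and $G(F_{2k+1}-1)=F_{2k}$, follow from $G(n)=L(n+1)-(n+1)$ plus Lemma~\ref{lem:LUvalues} (the latter is Equation~\eqref{eq:W2kplus1}). Granting these, your case analysis is correct: generically the indicator $\mathbf{1}[G(n-1)=F_{2j+1}-1]$ fires at $n\in\{F_{2j+2}-1,F_{2j+2}\}$ and the second index is diverted to the special case; at $n=F_{2k}$ both the difference $G(G(n-1))-G(a(n-1))$ and the indicator vanish because $F_{2k-1}$ is an upper Wythoff number. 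One caveat: the $b$-side special case $n=F_{2k+1}$ is \emph{not} ``entirely parallel'' as you assert --- there the difference term equals $+1$ (since $F_{2k}=L(F_{2k-1})$ is a lower Wythoff number, $G$ increments there) and it is killed by the indicator $-\mathbf{1}[F_{2k}-1=F_{2j}-1]=-1$; so the mechanism is a cancellation of two nonzero terms rather than two vanishing ones, and this needs to be said explicitly.

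What remains to make this a complete proof is bookkeeping you only gesture at: (i) base cases and the small-index collisions $F_1=F_2=1$, $F_3=2$ must be handled, and the exceptional sets pinned down as $\{F_{2k}-1:k\ge 1\}$ for $a$ (which contains $0$, since $a(0)=1=G(0)+1$) but $\{F_{2k+1}-1:k\ge 1\}$ for $b$ (which must exclude $0=F_1-1$, else the indicator would wrongly predict $b(0)=-1$); (ii) the strong-induction hypothesis is applied at the inner arguments $a(n-1)$ and $b(n-1)$, so one must note these are $<n$. Both points are routine. In exchange for this extra care, your route buys something the paper does not offer: a proof of Stoll's theorem deduced entirely from Theorems~\ref{th:KS} and \ref{th:GUL} and Lemma~\ref{lem:LUvalues}, in the same spirit as the paper's proofs of Theorems~\ref{th:main} and \ref{th:AZ}.
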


It follows by combining Theorem \ref{th:main}, Theorem \ref{th:AZ}, Stoll's Theorem \ref{th:Stoll}, and Equation (\ref{eq:W2kplus1}) that $$(b(n))=(m(n)).$$
Then Stoll's theorem also gives an expression for $(a(n))$. See Shallit's paper \cite{Walnut} for proofs using the computer software Walnut.

\section{Acknowledgment} I  thank  Jean-Paul Allouche for useful remarks. Thanks are also due to one referee for pointing out an important reference, and to a second referee for many remarks that have resulted in an improvement of  the presentation.

\bibliographystyle{plain}

\bigskip

\noindent 2020 {\it Mathematics Subject Classification}: Primary 05A17, Secondary 68R15

\noindent \emph{Keywords:} Hofstadter's G-sequence, slow Beatty sequence, Wythoff sequence.

\end{document}